\newtheorem{theorem}{Theorem}[section]
\newtheorem{corollary}[theorem]{Corollary}
\newtheorem{proposition}[theorem]{Proposition}
\newtheorem{fact}{Fact}
\theoremstyle{definition}%per chi segue questa linea
\newtheorem{definition}[theorem]{Definition}
\newcommand{\restrict}{\,{\mathbin{\vert\mkern-0.3mu\grave{}}}\,}
\newcommand{\remove}[1]{}
\DeclareMathOperator{\cone}{\rm cone}
\DeclareMathOperator{\Rn}{{\mathbb R^{\it n}}}
\DeclareMathOperator{\McNn}{\mathcal M([0,1]^{\it n})}
\DeclareMathOperator{\McN}{\mathcal M}
\DeclareMathOperator{\conv}{\rm conv}
\DeclareMathOperator{\I}{[0,1]}
\DeclareMathOperator{\cube}{[0,1]^{\it n}}
\DeclareMathOperator{\interior}{\rm int}
 \title[Strong semisimplicity and tangents]
{Bouligand-Severi tangents
in MV-algebras}
\author{Manuela Busaniche and Daniele Mundici}
\address[M.Busaniche]{Instituto de Matem{\'a}tica Aplicada del Litoral \\
CONICET-UNL \\
G\"{u}emes 3450\\ S3000GLN-Santa Fe\\
Argentina}
\email{ mbusaniche@santafe-conicet.gov.ar }
\address[D. Mundici]{Dipartimento di
Matematica ``Ulisse Dini'' \\
Universit\`{a} degli Studi di Firenze \\
viale Morgagni 67/A \\
I-50134 Firenze \\
Italy}
\email{ mundici@math.unifi.it }
\date{\today}
\begin{document}

\thanks{2000 {\it Mathematics Subject Classification.}
Primary:   06D35  Secondary: 49J53, 47H04, 47N10, 49J52, 54C60 }
\keywords{\L ukasiewicz logic, MV-algebra,  strongly semisimple,
Bouligand-Severi tangent, syntactic and semantic consequence, semisimple}

   \begin{abstract}
In their recent seminal paper
published in the Annals of Pure and Applied
Logic,   Dubuc and Poveda call an
MV-algebra $A$  {\it strongly semisimple} if all  principal
quotients of $A$  are semisimple.
All boolean algebras are strongly semisimple, and so are
all finitely presented MV-algebras.
We  show that for  any 1-generator
MV-algebra semisimplicity is equivalent
to strong semisimplicity.
Further,  a semisimple 2-generator
MV-algebra  $A$ is strongly semisimple iff
its maximal spectral  space
$\mu(A)\subseteq \I^2$  does not have any
rational Bouligand-Severi tangents at its rational points. 
In general, when  $A$ is  finitely generated
  and $\mu(A)\subseteq \cube$  has a Bouligand-Severi
tangent then $A$  is not strongly semisimple.
\end{abstract}

\maketitle

\section{Introduction: stable consequence}
We refer to \cite{cigdotmun} and \cite{mun11} for background on
MV-algebras.
Following Dubuc and Poveda \cite{dubpov},
we say that an MV-algebra $A$  is
{\it strongly semisimple}  if for every principal ideal
$I $  of $A$ the quotient $A/I$  is semisimple.
 Since  $\{0\}$ is
a principal ideal of $A$,  every
strongly semisimple MV-algebra is
semisimple.

From a classical result by Hay  \cite{hay} and W\' ojcicki \cite{woj}
(also see \cite[4.6.7]{cigdotmun} and \cite[1.6]{mun11}),  it follows that every
 finitely presented MV-algebra   is  strongly semisimple.
 Trivially, all  hyperarchimedean MV-algebras, whence in particular
 all boolean algebras, are strongly semisimple, and so are
 all simple and all finite MV-algebras, \cite[3.5 and 3.6.5]{cigdotmun}.

Our paper is devoted to characterizing $n$-generator
strongly semisimple MV-algebras for $n=1,2.$
In Theorem \ref{theorem:onedim} we show that when  $n=1$
 strong semisimplicity is equivalent to semisimplicity.

 As the reader will recall (\cite[9.1.5]{cigdotmun}),
  the free $n$-generator MV-algebra is the
  MV-algebra  $\McNn$ of
  all McNaughton functions $f\colon \cube\to \I$,
  with pointwise operations of negation $\neg x=1-x$
  and truncated addition $x\oplus y=\min(1,x+y).$

  For any
  nonempty closed set $X\subseteq [0,1]^n$
 we let   $\mathcal M(X)$  denote the MV-algebra of
 restrictions to $X$ of the functions in $\McNn$, in symbols,
 $$\mathcal M(X)=\{f\restrict X\mid f\in \McNn\}.$$
 By \cite[3.6.7]{cigdotmun}, $\mathcal M(X)$ is a semisimple
 MV-algebra---actually, up to isomorphism, $\mathcal M(X)$
 is the most general possible $n$-generator semisimple
 MV-algebra $A$: to see this,
   pick  generators $\{a_1, \ldots , a_n\}$ of   $A$.
Let $	\pi_i\colon \I^n\to \I$  be the projection functions
in the free MV-algebra  $\McN(\I^n)$ for $i=1, \ldots, n.$
Then the assignment that maps $\pi_i\mapsto a_i$ for each $i=1, \ldots, n$  uniquely extends
 to a homomorphism $\eta_a\colon \mathcal M(\I^n)\to A$
 of the free $n$-generator MV-algebra onto $A$.
 Let $\mathfrak h_a=\ker(\eta_a)$ be the kernel of this
 homomorphism and
 \begin{equation}
 \label{equation:zeroset-from-generators}
 \mathcal {Z}_a=\bigcap\{Zf\mid f\in \mathfrak h_a\}
 \end{equation}
 the intersection of the zerosets of the McNaughton functions
 in $\mathfrak h_a$. Then 
  \begin{equation}
 \label{equation:semisimple}
 \mbox{$A\cong \mathcal M(\mathcal Z_a)$.}
 \end{equation}

In Theorem \ref{theorem:notss-implies-tangent} we prove that
a  2-generator  MV-algebra
   $A=\mathcal M(X)$  with   $X\subseteq \I^2$ is strongly
  semisimple   iff $X$ has no rational outgoing Bouligand-Severi tangent vector
at any of its rational points, \cite{bou, sev, rif}.
Having such a tangent is a sufficient condition
for $\mathcal M(X)$ not to be strongly semisimple,
for any $X\subseteq \cube,$  (Theorem \ref{theorem:rational-tangent-implies-notss}).
 Here, as usual, a point
 $x\in \mathbb R^n$  is said to be
 {\it rational}  if so are all its coordinates.

\smallskip
By a {\it rational vector}   we mean
a nonzero vector $w\in \Rn$ such that the line
$\mathbb Rw\subseteq \Rn$ contains at least two rational points.

\smallskip
\noindent
{\it Notation}:  Given $g\in \McN(\I^n)$ let $Zg=\{x\in [0,1]^n \mid g(x)=0\}.$   Following
 \cite[p.33]{cigdotmun} or
\cite[p.21]{mun11}, for $k\in \mathbb{N}$,
$k\centerdot g$
stands for  $k$-fold pointwise truncated addition of $g$.

%%%%%%%%%%%%%%%%%%%%%%%%%%%%%%
\section{One-generator MV-algebras}
%%%%%%%%%%%%%%%%%%%%%%%%%%%%%%

  \begin{theorem}
  \label{theorem:onedim}
Every  one-generator semisimple
MV-algebra $A$ is strongly semisimple.
\end{theorem}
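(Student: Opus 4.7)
The plan is to reduce strong semisimplicity to a concrete question about piecewise-linear functions on a closed subset of $[0,1]$, and then dispatch it using the integer-slope structure of McNaughton functions.

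By (\ref{equation:semisimple}), any semisimple one-generator MV-algebra $A$ is isomorphic to $\mathcal M(X)$ for some nonempty closed $X \subseteq [0,1]$, so I may work inside this concrete algebra. Fix $f \in \mathcal M(X)$ and let $\rho\colon \mathcal M(X) \to \mathcal M(X\cap Zf)$ be the restriction homomorphism; it is surjective because every element of $\mathcal M(X\cap Zf)$ lifts to a McNaughton function on $[0,1]$. Since $\mathcal M(X)$ is semisimple, its maximal ideals are exactly the point-evaluations $\mathfrak m_x$ with $x\in X$, and those containing $\langle f\rangle$ correspond precisely to $x \in X\cap Zf$. Hence $\ker\rho$ coincides with the intersection of all maximal ideals containing $\langle f\rangle$, so showing $\mathcal M(X)/\langle f\rangle$ semisimple reduces to the inclusion $\ker\rho \subseteq \langle f\rangle$, i.e., to the claim that every $g \in \mathcal M(X)$ vanishing on $X\cap Zf$ satisfies $g \le n f$ pointwise on $X$ for some $n\in \mathbb N$ (whence $g \le n\centerdot f$ since $g\le 1$).

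I prove this bound locally at each $x_0 \in X\cap Zf$. Lift $f,g$ to McNaughton functions $F,G$ on $[0,1]$ on a common PL subdivision whose vertices are all rational. If $x_0$ is irrational, then $x_0$ lies in the interior of a single linear piece of $F$; since $F(x_0)=0$ and $F\ge 0$, this piece is identically zero, so $g \equiv 0$ on $X$ intersected with that rational neighborhood of $x_0$. If $x_0$ is rational, there exists $\epsilon>0$ such that on each side of $x_0$ the functions $F,G$ are linear with nonnegative integer slopes $a^\pm, b^\pm$; either $a^\pm=0$ (forcing $F\equiv 0$ there, hence $g\equiv 0$ on $X$ there) or $a^\pm\ge 1$ and $g/f = b^\pm/a^\pm \le b^\pm$ on that side. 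In every case there is a neighborhood $U_{x_0}$ and an integer $c_{x_0}$ with $g \le c_{x_0}\,f$ on $X\cap U_{x_0}$. Compactness of $X\cap Zf$ yields a finite subcover giving $n_1$ with $g\le n_1 f$ on $X\cap \bigcup U_{x_0}$; on the complementary compact subset of $X$, $f$ is bounded below by a positive constant, so $g\le n_2 f$ there for some $n_2$. Taking $n=\max(n_1,n_2)$ then yields $\mathcal M(X)/\langle f\rangle \cong \mathcal M(X\cap Zf)$, which is semisimple.

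The only real obstacle is the uniformity of the local bounds, that is, ruling out a sequence in $X$ along which $g/f$ blows up while approaching $Zf$. In one variable this cannot happen: irrational zeros of $f$ are forced to lie in flat zero pieces by the rational-vertex condition, and at rational zeros the integer-slope constraint pins the local ratio down to a finite rational. In dimension $\ge 2$ this rigidity fails, and a Bouligand--Severi tangent is exactly the geometric signature of such blow-up, which is why the analogous reduction will break in the higher-dimensional theorems that follow.
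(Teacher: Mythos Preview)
Your argument is correct and follows essentially the same route as the paper: reduce to showing that $g\le n\centerdot f$ on $X$ whenever $g$ vanishes on $X\cap Zf$, prove this locally near each zero of $f$ using piecewise linearity, and globalize by compactness. Your case split (irrational versus rational $x_0$, exploiting integer slopes and rational breakpoints) is just a repackaging of the paper's split between interior points and vertices of a linearizing triangulation, and is arguably a bit cleaner.
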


\begin{proof}
%Pick any generator $a$ of  $A$.
%Let $	\pi\colon \I\to \I$  be the identity function
%in the free MV-algebra  $\McN(\I).$
%Then the map $\pi\mapsto e$ uniquely extends
% to a homomorphism $\eta_a\colon \mathcal M(\I)\to A$
% of the free $1$-generator MV-algebra onto $A$.
% Let $\mathfrak h_a=\ker(\eta_a)$ be the kernel of this
% homomorphism and
% \begin{equation}
% \label{equation:zeroset-from-generators}
% \mathcal {Z}_a=\bigcap\{Zf\mid f\in \mathfrak h_a\}
% \end{equation}
% the intersection of the zerosets of the McNaughton functions
% in $\mathfrak h_a$. By  \cite[3.6.7]{cigdotmun},
%  \begin{equation}
% \label{equation:semisimple}
% \mbox{$A\cong \mathcal M(\mathcal Z_e)$.}
% \end{equation}
%In the light of
%(\ref{equation:zeroset-from-generators})-(\ref{equation:semisimple}),
%it is sufficient to
% prove the following equivalent statement: for every nonempty closed
%$X\subseteq \I$ the semisimple MV-algebra
%$\mathcal M(X)$ is strongly semisimple.
As in (\ref{equation:zeroset-from-generators})-(\ref{equation:semisimple}), let
 $X\subseteq [0,1]$ be a nonempty closed set such that $A\cong \McN(X).$
For some  $g\in \McN(\I)$ let $J$ be the principal ideal of
$\McN(\I)$ generated by $g$,  and
$J^{\shortmid}$ be the principal
ideal of  $\mathcal M(X)$ generated by  $g^{\shortmid}=g\restrict X.$
Observe that  $J^\shortmid=\{l\restrict X\mid l \in J\}$.
For every  $f\in \McN(\I)$, letting
$f^\shortmid = f\restrict X$   we must prove:  if
$f^\shortmid$  belongs to all maximal ideals of
$\McN(X)$ to which $g^\shortmid $  belongs, then
$f^\shortmid$ belongs to $J^\shortmid$.
In the light of  \cite[3.6.6]{cigdotmun} and \cite[4.19]{mun11},
this amounts to proving
\begin{equation}
\label{equation:ebbene}
\mbox{if  }
f =0 \mbox{ on } Zg\cap X
\mbox{  then   } f\restrict X\in J^{\shortmid}.
\end{equation}
%Let us write  $f^{\shortmid}=f\restrict X$.
Let $\Delta$ be a triangulation of $\I$  such that $f$ and $g$ are
linear over every simplex of $\Delta.$
The existence of $\Delta$  follows from
the piecewise linearity of $f$ and $g$, \cite{sta}.
In view of the compactness of $X$ and $\I$,
it  is sufficient to settle the following

\medskip
\noindent {\it Claim.}
Suppose  $ f\in \mathcal M(\I)$
vanishes over $Zg\cap X$. Then for
all $x\in X$ there is an open neighbourhood
$\mathcal N_x\ni x$ in $\I$
 together with an integer $m_x\geq 0$ such that  $m_x\centerdot
 g\geq f$   on $\mathcal N_x\cap X.$

\medskip
We proceed by cases:

\medskip
\noindent
{\it Case 1:}  $g(x)>0.$  Then for some  integer  $r$ and open neighbourhood
$\mathcal N_x\ni x$ we have  $g>1/r$  over $\mathcal N_x$. Letting
 $m_x= r$ we have  $1=m_x\centerdot g\geq f$
over $\mathcal N_x,$ whence a fortiori,
$m_x\centerdot g\geq f$ over  $\mathcal N_x\cap X.$

\medskip
\noindent
{\it Case 2:}  $g(x)=0.$  Since $ f$
vanishes over $Zg\cap X$,  then
$f(x)=0.$
Let  $T$ be a 1-simplex of $\Delta$  such that $x\in T.$
Let $T_x$ be the smallest face of $T$ containing $x$.

\medskip
\noindent
{\it Subcase 2.1:}  $T_x=T$.  Then $x\in \interior(T)$. Since $g$ is linear over $T$
then $g$ vanishes over $T$. By our hypotheses on $f$ and $\Delta$,
 $f$ vanishes over $T$, whence
and $0=g\geq f=0$ on $T$. Letting $\mathcal N_x=\interior(T)$ and
$m_x=1$, we get $m_x\centerdot g\geq f$  over $\mathcal N_x$
whence a fortiori,
the inequality holds over  $\mathcal N_x\cap X.$

\medskip
\noindent
{\it Subcase 2.2:} $T_x=\{x\}$. Then   $T=\conv(x,y)$ for some $y\not=x$.
Without loss of generality,  $y>x.$ We will exhibit  a
right open neighbourhood  $\mathcal R_x \ni x$ and an integer
$r_x\geq 0$ such that
 $r_x\centerdot
 g\geq f$   on $\mathcal R_x\cap X.$  The same argument yields
 a left neighbourhood  $\mathcal L_x\ni x$  and
 an integer
$l_x\geq 0$ such that
 $l_x\centerdot
 g\geq f$   on $\mathcal L_x\cap X.$  One then takes
 $\mathcal N_x=\mathcal R_x \cup \mathcal L_x $
 and $m_x=\max(r_x,l_x).$

\medskip
\noindent
{\it Subsubcase 2.2.1:}  If both $g$ and $f$ vanish at $y$, then they vanish
over $T$  (because they are linear over $T$). Upon defining
$\mathcal R_x=\interior(T)\cup\{x\}$ and
$r_x=1$ we get $r_x\centerdot g\geq f$ over $\mathcal R_x$,
whence in particular, over $\mathcal R_x\cap X.$

\medskip
\noindent
{\it Subsubcase 2.2.2:} If both $g$ and $f$ are $>0$ at $y$  then for all
suitably large $m$  we have $1=m\centerdot g\geq f$ on $T$.
Letting  $r_x$ the smallest such $m$ and
$\mathcal R_x=\interior(T)\cup\{x\}$ we have the desired inequality
over  $\mathcal R_x$ and a fortiori over $\mathcal R_x\cap X$.

\medskip
\noindent
{\it Subsubcase 2.2.3:}  $g(y)=0, f(y)>0$.  By our hypotheses on
 $\Delta$,  $g$ is linear over $T$
 and hence $g=0$  over $T$. It follows that
$X\cap T = \{x\}$:   for otherwise,  
our assumption $Zf\cap X\supseteq Zg\cap X$
together with the linearity of $f$ over $T$
would imply  $f(y)=0,$  against our current hypothesis.
Letting   $\mathcal R_x=\interior(T)\cup\{x\}$
and  $r_x=1$ we have $r_x\centerdot g\geq f$  over
$\mathcal R_x\cap X$.
\end{proof}

%%%%%%%%%%%%%%%%%%%%%%%%%%%%%%
 \section{Strong semisimplicity and Bouligand-Severi tangents}
 %%%%%%%%%%%%%%%%%%%%%%%%%%%%%%
 Severi \cite[\S 53, p.59 and p.392]{sev0},
 \cite[\S1, p.99]{sev} and independently, Bouligand
 \cite[p.32]{bou} called a half-line $H\subseteq \mathbb R^n$  {\it tangent} to a
 set $X\subseteq \mathbb R^n$ at an accumulation point $x$
 of $X$ if for all $\epsilon, \delta>0$ there is $y\in X$ other than $x$
such that  $||y-x||<\epsilon$, and the angle between $H$ and
 the half-line through $y$ originating at $x$ is $<\delta$.

 Here as usual,  $||v||$ is the length of vector  $v\in \Rn$.

     Severi \cite[\S 2, p. 100 and \S 4, p.102]{sev} noted that
     for any accumulation point $x$ of a closed set $X$ there
     is  a half-line $H$  tangent to $X$ at $x$.

Today  {(see, e.g., \cite[p.16]{book}, \cite[p.1376]{rif})},
Bouligand-Severi tangents are routinely  introduced as
follows:

\begin{definition}
\label{definition:B-S}
Let  $x$ be an element of   a  closed subset $X$ of $\Rn,$
and $u$  a unit vector in $\mathbb R^n.$
We then say that $u$ is a  {\it
 Bouligand-Severi  tangent  (unit) vector to $X$    at   $x$}
 if $X$ contains  a sequence $x_0,x_1,\dots$ of elements,
 all different from $x$,  such that
$$
\lim_{i\to \infty }x_i =  x\,\,\, \mbox{ and } \,\,\,\lim_{i\to \infty } {(x_i-x)}/{||x_i-x||}= u.
$$
 \end{definition}

 Observe that $x$ is an accumulation point of $X$.
   We further say that $u$  is   {\it outgoing }  if for some
 $\lambda >0$  the segment $\conv(x,x+\lambda u)$
 intersects $X$ only at $x.$

Already Severi
noted that  his definition of tangent half-line
$H=\mathbb R_{\geq 0}u$
is equivalent to
Definition \ref{definition:B-S}:

  \begin{proposition}
 \label{proposition:reformulation}
 {\rm  (\cite[\S 5, p.103]{sev}).}
 For any  nonempty closed subset $X$ of $\Rn$,
point  $x\in X$,  and   unit vector  $u\in \Rn$
 the following conditions are equivalent:
 \begin{itemize}
\item[(i)]  For all $\epsilon, \delta>0$,  the cone
  $\cone_{x,u,\epsilon,\delta}$
with  apex $x$, axis
 parallel to $u$, vertex angle $2\delta$  and height $\epsilon$
contains infinitely many points of $X$.

\smallskip
 \item[(ii)]   $u$ is a Bouligand-Severi tangent vector to $X$ at $x$.
 \end{itemize}
 \end{proposition}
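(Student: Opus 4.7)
The proof of Proposition \ref{proposition:reformulation} is essentially a packaging of the same geometric idea in two languages (quantitative cone condition vs.\ sequential limit), so the strategy is to prove the two implications separately by straightforward translation.

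For (ii) $\Rightarrow$ (i), fix arbitrary $\epsilon, \delta > 0$. I would use that the convergence $(x_i - x)/\|x_i - x\| \to u$ means the angle $\theta_i$ between the half-line from $x$ through $x_i$ and the direction $u$ tends to $0$; thus for all sufficiently large $i$, $\theta_i < \delta$. Since in addition $x_i \to x$ with $x_i \neq x$, eventually $0 < \|x_i - x\| < \epsilon$. Hence from some index onward all $x_i$ lie in $\cone_{x,u,\epsilon,\delta}$. Because $x_i \to x$ with $x_i \neq x$, the sequence takes infinitely many distinct values, so the cone contains infinitely many points of $X$.

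For the converse (i) $\Rightarrow$ (ii), I would build the sequence directly by a diagonal/nested argument. For each positive integer $n$, apply hypothesis (i) with $\epsilon = \delta = 1/n$ to obtain infinitely many points of $X$ in $\cone_{x,u,1/n,1/n}$; pick any such point $x_n \neq x$. By construction $\|x_n - x\| < 1/n$, so $x_n \to x$, and the angle between $(x_n - x)/\|x_n - x\|$ and $u$ is at most $1/n$, so $(x_n - x)/\|x_n - x\| \to u$. This produces the required sequence witnessing (ii).

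No step presents a real obstacle: both directions are almost mechanical once one unpacks the definition of the cone (apex $x$, axis $u$, half-vertex-angle $\delta$, height $\epsilon$). The only tiny care needed is to ensure the chosen $x_n$ are different from $x$ in the reverse direction, which is automatic because (i) guarantees \emph{infinitely many} points of $X$ in every such cone, and the cone-apex $x$ counts as at most one point. There is nothing delicate to verify beyond this bookkeeping.
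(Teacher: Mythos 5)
Your proof is correct. Note that the paper itself gives no proof of this proposition: it is stated as a known equivalence and attributed to Severi \cite[\S 5, p.103]{sev}. Your two implications are the standard unpacking of the definitions --- the tail of the witnessing sequence eventually lies in any given cone (and must take infinitely many distinct values since it converges to $x$ while avoiding $x$), and conversely choosing one point $x_n\neq x$ from $X\cap\cone_{x,u,1/n,1/n}$ for each $n$ yields the required sequence --- and both are carried out without gaps.
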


 When $n=1,$ $\cone_{x,u,\epsilon,\delta}$ is the
 segment $\conv(x,x+\epsilon u)$.
 When  $n=2,$
 $\cone_{x,u,\epsilon,\delta}$ is the
 isosceles triangle  $\conv(x,a,b)$ with vertex $x$,
 basis $\conv(a,b),$  height equal to $\epsilon$  and
 vertex angle  $\widehat{axb}=2\delta.$

%%%%%%%%%%%%%%%%%%%%%%%%%%%%%%%%%%%
%%%%%%%%%%%%%%%%%%%%%%%%
%%%%%%%%%%%%%

\bigskip
\noindent
 The next two results provide  geometric
 necessary and sufficient conditions
 on $X$ for  the semisimple MV-algebra
  $\mathcal M(X)$
 to be strongly semisimple.
 These conditions are stated in terms of
 the non-existence of Bouligand-Severi tangent vectors
 having certain rationality properties.

%A point $x\in \mathbb R^n$  is said to be
% {\it rational}  if so are all its coordinates.
% By a {\it rational vector}   we mean
% a nonzero vector $w\in \Rn$ such that the line
% $\mathbb Rw\subseteq \Rn$ contains at least two rational points.

 \begin{theorem}
\label{theorem:rational-tangent-implies-notss}
Let $X$ be a nonempty closed set  in $[0,1]^n$.
Suppose  $X$ has
 a Bouligand-Severi   rational outgoing  tangent vector $u$
at some rational point $x\in X$. Then
  $\mathcal M(X)$ is not strongly semisimple.
\end{theorem}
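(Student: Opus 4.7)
The plan is to produce a principal ideal $J^\shortmid$ of $\mathcal{M}(X)$ whose quotient fails to be semisimple. By \cite[3.6.6]{cigdotmun} and \cite[4.19]{mun11} it is enough to exhibit McNaughton functions $f,g\in\McNn$ such that $f$ vanishes on $Zg\cap X$ while no integer $m$ satisfies $m\centerdot g\geq f$ pointwise on $X$: then $f\restrict X$ lies in every maximal ideal of $\mathcal{M}(X)$ containing $\langle g\restrict X\rangle$, but not in that ideal itself.

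Using outgoingness I first pick a rational $\lambda>0$ so small that $q = x+\lambda u\in\cube$ and the segment $S=\conv(x,q)$ meets $X$ only at $x$; rationality of $x$ and of the direction $u$ make $q$ rational, whence $S$ is a rational segment. I then let $g\in\McNn$ be a McNaughton function with $Zg = S$; existence of such $g$ follows from the classical fact that every rational polytope in $\cube$ is the zeroset of some McNaughton function. For the test function I set
\[
f(y)\;=\;1\wedge D\bigl(|y_1-x_1|+\cdots+|y_n-x_n|\bigr),
\]
where $D$ is the common denominator of the coordinates of $x$; this is a McNaughton function (because $Dx\in\mathbb{Z}^n$) vanishing precisely at $x$ and satisfying $f(y) \geq D\|y-x\|$ for $y$ sufficiently close to $x$.

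By outgoingness, $Zg\cap X = S\cap X = \{x\}$, so $f$ vanishes on $Zg\cap X$. The heart of the argument is an asymptotic estimate along a Bouligand-Severi tangent sequence $\{x_i\}\subseteq X\setminus\{x\}$ with $x_i\to x$ and $(x_i-x)/\|x_i-x\|\to u$. Writing $\alpha_i$ for the angle between $x_i-x$ and $u$, we have $\sin\alpha_i\to 0$; and for large $i$ the orthogonal projection of $x_i$ onto the line $L=x+\mathbb{R}u$ lies in the interior of $S$, so $\dist(x_i,S)=\dist(x_i,L)=\|x_i-x\|\sin\alpha_i$. Since $g$ is Lipschitz with some constant $C$ and vanishes on $S$,
\[
g(x_i)\;\leq\;C\,\dist(x_i,S)\;=\;C\,\|x_i-x\|\sin\alpha_i\;=\;o(\|x_i-x\|),
\]
whereas $f(x_i)\geq D\|x_i-x\|$ eventually. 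Hence $g(x_i)/f(x_i)\to 0$, ruling out $m\centerdot g\geq f$ on $X$ for any fixed $m$: eventually $mg(x_i)<1$ and $mg(x_i)<f(x_i)$, so $m\centerdot g(x_i)=mg(x_i)<f(x_i)$.

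The main obstacle is the explicit construction of $g\in\McNn$ with $Zg=S$: the rational (not integer) endpoint $q$ and rational direction $u$ force one to clear denominators so as to realise $S$ as the intersection of rational half-spaces cut out by integer-coefficient affine forms vanishing at $x$. Although routine, this is precisely where the rationality hypotheses on $x$ and $u$ are genuinely used; once $g$ is in hand, the subsequent asymptotic comparison is entirely clean.
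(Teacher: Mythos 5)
Your proof is correct and follows essentially the same route as the paper: take $g$ with zeroset the rational segment $S=\conv(x,x+\lambda u)$, take a witness function vanishing only at $x$, and show that no truncated multiple of $g$ dominates the witness on $X$ by comparing the two along a Bouligand-Severi tangent sequence. The only real difference is in the key estimate --- the paper argues via directional derivatives and incremental ratios, while you use the Lipschitz bound $g(x_i)\leq C\,\dist(x_i,S)=C\|x_i-x\|\sin\alpha_i=o(\|x_i-x\|)$ against $f(x_i)\geq D\|x_i-x\|$, which is a clean equivalent (and your explicit $f$ replaces the paper's citation of \cite[2.10]{mun11} for a function with zeroset $\{x\}$); the one small slip is that a rational $\lambda$ alone need not make $x+\lambda u$ rational when $u$ is only a rational \emph{direction}, so one should choose $\lambda$ with $\lambda u$ rational, exactly as the paper's ``without loss of generality'' implicitly does.
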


\begin{proof}
Since $u$ is outgoing, let $\lambda>0$ satisfy
$X\cap \conv(x,x+\lambda u)=\{x\}$.
Without loss of generality  $x+\lambda u \in \mathbb Q^n$.
Our hypothesis together with { Proposition  \ref{proposition:reformulation} }
yields a sequence  $w_1, w_2,\dots$   of distinct
points of $X$, all distinct from $x$,
accumulating at $x$, at strictly decreasing
distances from $x$,  in such a way that
the sequence of unit vectors  $u_i$ given by
$
({w_i-x})/{||w_i-x||}
$
tends to $u$ as $i$ tends to $\infty.$
Let  $y=x+\lambda u.$
Since $X\cap \conv(x,y)=\{x\}$,
  no point $w_i$ lies on the segment
 $\conv(x,y)$, and we can further assume that the sequence of angles
 $\widehat{w_ixy}$ is strictly decreasing and tends to zero
 as $i$ tends to $\infty$.

Since both points $x$  and $y$ are  rational, then 
by  \cite[2.10]{mun11} for some
$g\in \mathcal M(\cube)$  the zeroset
$$Zg=\{z\in \cube\mid g(z)=0\}$$
coincides with the segment
$\conv(x,y)$.  Thus,
   $$\frac{\partial g(x)}{\partial(u)}=0.$$
Let $J$ be the ideal of $\mathcal M(\cube)$
  generated by $g$,
  $$
  J=\{f\in\McNn\mid f\leq k\centerdot g \mbox{ for some $k=0,1,2,\ldots$}\}.
  $$
% Here, following
% \cite[p.33]{cigdotmun} or
%\cite[p.21]{mun11}
%$k\centerdot g$
%stands for  $k$-fold pointwise truncated addition of $g$.
%
Then for each $f\in J$,
   $$\frac{\partial f(x)}{\partial(u)}=0.$$
Since the directional derivatives of $f$ at $x$ are continuous,
(meaning that the map $t\mapsto \partial f(x)/\partial t$ is continuous)
 it follows that
\begin{equation}
\label{equation:criterion}
   \lim_{t\to u}\frac{\partial f(x)}{\partial(t)}=\frac{\partial f(x)}{\partial(u)}=0.
\end{equation}
 Let $g^\shortmid =g\restrict X$  and
$$J^\shortmid=
\{f^\shortmid\in \mathcal M(X)\mid
 \, f^\shortmid\leq k\centerdot g^\shortmid
 \mbox{ for some $k=0,1,2, \ldots$}\}
 $$
 be the ideal of $\mathcal M(X)$ generated by $g^\shortmid$.
A moment's reflection shows that
\begin{equation}
\label{equation:ideals}
J^\shortmid=\{l \restrict X\mid l \in J\}.
\end{equation}
One inclusion is trivial.
For the converse inclusion, if
$f\restrict X \leq (k\centerdot g)\restrict X$ then
letting  $l=f\wedge k\centerdot g$ we get
$l \leq k\centerdot g$.
So $l\in J$  and $l\restrict X= f\restrict X,$ showing that
$f\restrict X $ is extendible to some $l\in J.$

For any  $f \in \McNn,$
the  piecewise linearity of $f$
ensures that  for all large $i$ the value of the incremental ratio
$({f(w_i)-f(x)})/{||w_i-x||}$
coincides with the directional derivative
${\partial f(x)}/{\partial u_i}$
along the unit vector $u_i=(w_i-x)/||w_i-x||$.
Thus in particular,  if  $f\restrict X = f^\shortmid\in J^\shortmid$, from
 (\ref{equation:criterion})-(\ref{equation:ideals}) it follows that
$$
\lim_{i\to\infty}\frac{f^\shortmid(w_i)-f^\shortmid(x)}{||w_i-x||}= 0.
$$

Since $x$ is rational, again by \cite[2.10]{mun11} there is
  $j\in \McNn$  with $Zj=\{x\}$.
  For some  $\omega>0$ we have
$\partial j(x)/\partial(u)=\omega,$ whence
$$
\lim_{i\to\infty}\frac{j^\shortmid(w_i)-j^\shortmid(x)}{||w_i-x||} = \omega.
$$
Therefore,   $j^\shortmid\notin J^\shortmid$.
Since $Zg\cap X=\{x\}$,
recalling  \cite[4.19]{mun11}
we see that
the only maximal of $\mathcal M(X)$
containing  $J^\shortmid$  is the set
of all functions in   $\mathcal M(X)$ that
vanish at $x$.
Thus,    $j^\shortmid$ belongs to all maximal ideals
of $\mathcal M(X)$  containing  $J^\shortmid$.
   By  \cite[3.6.6]{cigdotmun},
  $ \mathcal  M(X)$ is not strongly semisimple: specifically,  $j'/J'$ is
  infinitesimal in the principal quotient   $ \mathcal  M(X)/J'$.
\end{proof}

  As a partial converse we have:

%%%%%%%%%%%%%%%%%%
%%%%%%%%%%%%%%%%%%
%%%%%%%%%%%%%%%%%%
%%%%%%%%%%%%%%%%%%
%%%%%%%%%%%%%%%%%%
%%%%%%%%%%%%%%%%%%

 \begin{theorem}
\label{theorem:notss-implies-tangent}
Let $X\subseteq [0,1]^n$ be a nonempty closed set.
Suppose the MV-algebra
$\mathcal M(X)$ is not strongly semisimple.

\begin{itemize}
\item[(i)]  Then $X$ has a Bouligand-Severi  tangent
 vector  $u$ at some point $x\in X$ satisfying the following
 {\rm nonalignment}  condition: there is a sequence of distinct  $w_i\in X$,
 all distinct from $x$ such that
 $$\lim_{i\to \infty}w_i=x,\,\,\,\,\,
 \lim_{i\to \infty}\frac{w_i-x}{||w_i-x||}=u, \,\,\,\,\,
 w_i\notin \conv(x,x+u)\,\, \mbox{for all }\,i.
 $$

\medskip
\item[(ii)] In particular, if $n=2$, then
$X$ has a Bouligand-Severi
  {\rm outgoing rational}
 tangent
 vector $u$
at some {\rm rational}  point $x\in X.$
\end{itemize}
\end{theorem}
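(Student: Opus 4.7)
The plan is to extract a witnessing pair $(f,g)$ of McNaughton functions on $\cube$ from the failure of strong semisimplicity and to mine a tangent direction from a sequence along which $f/g$ blows up. Pick $g\in\McNn$ so that the principal quotient $\mathcal M(X)/(g^\shortmid)$ is not semisimple, where $g^\shortmid=g\restrict X$; as in the proof of Theorem~\ref{theorem:rational-tangent-implies-notss}, there is $f\in\McNn$ such that $f^\shortmid$ belongs to every maximal ideal of $\mathcal M(X)$ containing $(g^\shortmid)$ but $f^\shortmid\notin(g^\shortmid)$. Hence $f$ vanishes on $Zg\cap X$, while for each $k$ there is $w_k\in X$ with $f(w_k)>k\centerdot g(w_k)$. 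Since $f\leq 1$ forces $g(w_k)<1/k$, a subsequence yields $w_k\to x^*\in Zg\cap X$ with $f(x^*)=0$ and $w_k\neq x^*$; a further extraction produces a unit vector $u=\lim(w_k-x^*)/\|w_k-x^*\|$, which is automatically a Bouligand--Severi tangent of $X$ at $x^*$.

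For the nonalignment clause in (i) I would argue by contradiction: suppose that after any further extraction $w_k\in\conv(x^*,x^*+u)$, so $w_k=x^*+t_k u$ with $t_k\to 0$. Fix a triangulation $\Delta$ of $\cube$ making both $f$ and $g$ linear on each simplex. For large $k$ the segment $[x^*,w_k]$ sits in a single simplex $\tau\in\Delta$ with $x^*\in\tau$, so $g(w_k)=t_k\,D_u g(x^*)$. Combining the Lipschitz bound $f(w_k)\leq L t_k$ with $f(w_k)>k g(w_k)$ forces $D_u g(x^*)=0$, hence $g\equiv 0$ on an initial segment $x^*+[0,\delta]u$, so $w_k\in Zg\cap X$ and therefore $f(w_k)=0$, contradicting $f(w_k)>0$. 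Thus a subsequence of the $w_k$ lies off $\conv(x^*,x^*+u)$, proving (i).

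For (ii) I would insist that $\Delta$ be a \emph{rational} triangulation of $[0,1]^2$ and locate $x^*$ in $\Delta$. If $x^*$ were interior to a 2-simplex $\tau$, the linearity and non-negativity of $f$ on $\tau$ together with $f(x^*)=0$ would force $f\equiv 0$ on $\tau$, contradicting $f(w_k)>0$ for $w_k\in\tau$. If $x^*$ were relatively interior to an edge $e'$ of $\Delta$, the interior-minimum principle on $e'$ would give $f\equiv g\equiv 0$ on $e'$; on each 2-simplex $\sigma_j\in\Delta$ abutting $e'$, $f$ and $g$ would then both be proportional to the signed distance from the rational line carrying $e'$, so $f=\mu_j g$ on $\sigma_j$, once more contradicting $f(w_k)>k g(w_k)$. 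Hence $x^*$ is a $0$-simplex of $\Delta$, and so $x^*\in\mathbb Q^2$. For the rationality of $u$, in the 2-simplex $\sigma\in\Delta$ that contains $w_k$ for infinitely many $k$ write $g(x)=\langle a,x-x^*\rangle$ with $a\in\mathbb Z^2$; the case $a=0$ would give $\sigma\subseteq Zg$ and the previous contradiction, so $a\neq 0$, and the Lipschitz estimate yields $\langle a,u\rangle=0$. Since $a^\perp$ is a rational line of $\mathbb R^2$, $u$ is rational.

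The most delicate step is the \emph{outgoing} property of $u$. Suppose, toward a contradiction, that $X\cap\conv(x^*,x^*+\lambda u)\neq\{x^*\}$ for every $\lambda>0$; then there are points $y_k=x^*+s_k u\in X$ with $s_k\searrow 0$. For small $s_k$ each $y_k$ lies in $\sigma$ (or, if $u$ lies along an edge of $\sigma$ at $x^*$, in one of the two 2-simplices flanking that edge), and $y_k\in Zg\cap X$ forces $f(y_k)=0$. Writing $f(x)=\langle b,x-x^*\rangle$ on $\sigma$, the identities $g(y_k)=f(y_k)=0$ give $\langle a,u\rangle=\langle b,u\rangle=0$; in the plane this forces $a$ and $b$ to be parallel, and the non-negativity of $f,g$ together with $g\not\equiv 0$ on $\sigma$ yields $f=\mu g$ on $\sigma$ for some $\mu\geq 0$. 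Evaluating at $w_k\in\sigma$ gives $f(w_k)=\mu g(w_k)$, contradicting $f(w_k)>k g(w_k)$ for $k>\mu$, so $u$ must be outgoing. The main obstacle is precisely this last step: it is the codimension-one condition $\langle a,u\rangle=\langle b,u\rangle=0$ collapsing to parallelism of $a$ and $b$ in $\mathbb R^2$---hence to proportionality of $f$ and $g$ on $\sigma$---that makes the plane the natural setting in which non-semisimplicity of a principal quotient converts into an outgoing rational tangent at a rational point.
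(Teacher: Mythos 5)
Your proposal is correct, and its first half --- the witness pair $f,g$ (the paper's $j,g$), the domination sequence $w_k$ with $f(w_k)>k\centerdot g(w_k)$, the accumulation point, the tangent direction, and the nonalignment step via $\partial g(x^*)/\partial u=0$ --- is essentially the paper's proof of (i), only implemented with a linearizing triangulation and a Lipschitz bound instead of the paper's cone estimates and the continuity of $t\mapsto\partial g(x)/\partial t$ (Facts \ref{fact:g(x)=0}--\ref{fact:nonalignment}). Where you genuinely diverge is part (ii). The paper stays triangulation-free: it proves $\partial g(x+\epsilon u)/\partial u^{\perp}>0$ and $\partial j(x)/\partial u>0$ (Facts \ref{fact:six-hic} and \ref{fact:j-jumps}), deduces outgoingness from the latter, and obtains rationality of $x$ and $u$ by intersecting two distinct rational lines (a zero line of the linear piece of $j$ at $x$, and the line through $x$ and $x+u$, which carries a zero segment of a piece of $g$). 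You instead fix a rational triangulation $\Delta$ adapted to $f,g$, exclude that $x^*$ lies in the interior of a $2$-simplex or in the relative interior of an edge (so $x^*$ is a vertex of $\Delta$ --- a slightly stronger, pleasantly concrete conclusion), get rationality of $u$ from $\langle a,u\rangle=0$ with $a$ the integer gradient of $g$ on the $2$-simplex $\sigma$ capturing the $w_k$, and get outgoingness from the planar collapse $a\parallel b$, i.e.\ $f=\mu g$ on $\sigma$. This is the same codimension-one phenomenon the paper exploits, realized by explicit linear algebra on a simplex rather than by cone-domination arguments; your version is more computational and ties $x$ to the combinatorics of $\Delta$, while the paper's isolates the two directional-derivative facts that exhibit directly why $j^\shortmid/J^\shortmid$ is infinitesimal.

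Three small points you should make explicit, none fatal: (a) in the edge case of your vertex argument, $f=\mu_j g$ on a flanking $2$-simplex tacitly assumes $g\not\equiv 0$ there; if $g\equiv 0$ on $\sigma_j$ the contradiction comes instead from $w_k\in Zg\cap X$ forcing $f(w_k)=0$, exactly as in your $a=0$ case; (b) statement (i) asks for pairwise distinct $w_i$, which you obtain by discarding repetitions (a point occurring for infinitely many $k$ would satisfy $g=0$, hence $f=0$, there, contradicting $f(w_k)>0$); (c) in the outgoing step, the claim that $y_k=x^*+s_ku$ lies in $\sigma$ for small $s_k$ needs the standard remark that $u$, being a limit of the directions $(w_k-x^*)/\|w_k-x^*\|$ of points of the closed simplex $\sigma$ viewed from its vertex $x^*$, lies in the closed tangent cone of $\sigma$ at $x^*$, so $\conv(x^*,x^*+s_0u)\subseteq\sigma$ for some $s_0>0$.
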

%%%%%%%%%%%%%%%%%%
%%%%%%%%%%%%%%%%%%
%%%%%%%%%%%%%%%%%%
%%%%%%%%%%%%%%%%%%

\begin{proof}
(i) % Recall from \cite[p.33]{cigdotmun} or
%\cite[p.21]{mun11} the notation
%$k\centerdot a$ for $n$-fold truncated addition.
The hypothesis yields a  function
$g\in \McNn$,  with its restriction
  $g^\shortmid= g\restrict X \in \mathcal M(X),$
  in such a way that
  the principal ideal
  $J^\shortmid$ of $\mathcal M(X)$
  generated by $g^\shortmid,$
$$
J^\shortmid=
\{l^\shortmid\in \mathcal M(X)  \mid l^\shortmid
 \leq k\centerdot g^\shortmid  \,\, \mbox{for some } k=1,2,\dots\}
$$
is  strictly contained in the
intersection  $I$ of all maximal ideals of $\mathcal M(X)$  containing $J'$.
Thus    for some $j\in  \McNn$ letting $j^\shortmid=j\restrict X$
we have  $j^\shortmid\in I\setminus J^\shortmid.$
By     \cite[3.6.6]{cigdotmun} and \cite[4.19]{mun11},
\begin{equation}
\label{equation:zeroforzero}
% j^\shortmid\in\bigcap \mu_{\supseteq J}(M(X)), i.e.,
j^\shortmid=0 \mbox{ on } Zg^\shortmid,\,\,\, {\rm i.e.,}
 \,\,\,X\cap Zj \supseteq X\cap Zg
\end{equation}
and
\begin{equation}
\label{equation:preliminary-nodominance}
 \forall m=0,1,\dots\exists z_m\in X,\,\,j^\shortmid(z_m)
 > m\centerdot g^\shortmid(z_m).
\end{equation}
There is a sequence  of integers $0< m_0<m_1<\dots$ and a subsequence
$y_0,y_1,\dots$  of   $\{z_i,z_2,\dots\}$  such that
$y_i\not= y_l$  for $i\not=l$   and

\begin{equation}
\label{equation:infinitesimal-dominates}
 \forall t =0,1,\dots,\,\,\,  j^\shortmid(y_t)>m_t\centerdot g^\shortmid(y_t).
\end{equation}
The compactness of  $X$ yields
  an accumulation point $x\in X$
of  the $y_t$.
Without loss of generality  (taking a subsequence, if
necessary) we can further assume

\begin{equation}
\label{equation:approaching}
||y_0-x||>||y_1-x||>\cdots,   \mbox{ whence }
 \lim_{i\to\infty}  y_i=x.
\end{equation}
By (\ref{equation:infinitesimal-dominates}), for all $t$,
$j^\shortmid(y_t)>0$. Then by  (\ref{equation:zeroforzero}),
$g^\shortmid(y_t)>0$.
%We record these two inequalities:
%\begin{equation}
%\label{equation:bothnonzero}
%\forall t=0,1,\dots\,\,  j^\shortmid(y_t)>0,\,\,\,g^\shortmid(y_t)>0.
%\end{equation}
%
%
For each $i=0,1,\dots$, letting  the unit vector $u_i\in \Rn$
be defined by
$
u_i= (y_i-x)/{||y_i-x||},
$
we obtain  a sequence
 of (possibly repeated) unit vectors   $u_i\in \Rn$.
 Since the boundary of the unit ball in $\Rn$ is compact,
some  unit vector
$u\in \Rn$ satisfies
$$
\forall \epsilon>0  \mbox{ there are infinitely many $i$ such that }  ||u_i-u||<\epsilon.
$$
Some subsequence
$w_0,w_1,\dots$
of the  $y_i$ will satisfy the condition
\begin{equation}
\label{equation:iconale-bis}
\forall \epsilon, \delta>0 \mbox{ there is $k$ such that for all $i>k$, }\,\,\,
 w_i\in \cone_{x,u,\epsilon,\delta}.
\end{equation}
Correspondingly,  the sequence  $v_0,v_1,\dots$
 given by $v_k=(w_k-x)/{||w_k-x||}$  will satisfy
 %
 %
% \commento{dispensable (\ref{equation:lim-vi=u})?}
 %
 %
\begin{equation}
\label{equation:lim-vi=u}
\lim_{i\to \infty} v_i=u.
\end{equation}
We have just proved that  $u$ is a Bouligand-Severi tangent to $X$ at $x$.

 \medskip
To complete the proof of (i) we prepare:

\medskip
  \begin{fact}
\label{fact:g(x)=0}
$g^\shortmid(x)=0$.
\end{fact}
Otherwise, from the continuity of $g$, for some real
$\rho>0$ and  suitably small $\epsilon >0$,
we have the inequality $g(z)>\rho$ for all
$z$ in
the open ball
$B_{x,\epsilon}$ of radius $\epsilon$  centered at $x$.
By (\ref{equation:iconale-bis}), $B_{x,\epsilon}$ contains
 infinitely many $w_i$. There is a fixed
 integer     $\bar m>0$  such
that $1=\bar{m}\centerdot g^\shortmid \geq j^\shortmid$  for all these
$w_i$, which contradicts (\ref{equation:infinitesimal-dominates}).

\medskip

\begin{fact}
\label{fact:j(x)=0}
$j^\shortmid(x)=0.$
\end{fact}
 This immediately follows
from  (\ref{equation:zeroforzero}) and {  Fact \ref{fact:g(x)=0}.}

\medskip
\begin{fact}
\label{fact:g-u-flat}
%\begin{equation}
%\label{equation:acca}
$ {\partial g(x)}/{\partial u}=0.$
%\end{equation}
\end{fact}

By way of contradiction, suppose ${\partial g(x)}/{\partial u}=\theta>0.$
%The continuity of directional derivatives of $g$ at $x$ yields
%an open neighbourhood $\mathcal U\ni u$ such that
%$\frac{\partial g(x)}{\partial t}>0$
%for each  unit vector $t\in \mathcal U$.
In view of the continuity of
the map $t\mapsto {\partial g(x)}/{\partial t}$,
let  $\delta>0$ be such that
 ${\partial g(x)}/{\partial r}>\theta/2,$
 for any unit vector   $r$
 such that   $\widehat{ru}<\delta.$
 Since  by { Fact \ref{fact:j(x)=0}}  $j(x)=0$
 and both $g$ and $j$ are piecewise linear,
 there is
 an $\epsilon >0$ together with an integer
 $\bar k>0$ such that
$\bar k\centerdot g\geq j$ over the
cone
 $C=\cone_{x,u,\epsilon,\delta}$.
{ By  (\ref{equation:iconale-bis}),}
$C$  contains
infinitely many $w_i,$ {  in contradiction with
 (\ref{equation:infinitesimal-dominates}).}

 \medskip
To conclude the proof of the nonalignment
 condition in   (i),  it is sufficient to
 settle the following:

\begin{fact}
\label{fact:nonalignment}
There is $\lambda>0$  such
that for all large $i$
the segment $\conv(x,x+\lambda u)$ contains no $w_i.$
\end{fact}

For otherwise,  from { Fact \ref{fact:g-u-flat}}
  $\partial g(x)/\partial(u)=0,$ whence
  the piecewise linearity of $g$ ensures that
$g$ vanishes on infinitely many  $w_i$ of
$\conv(x,x+\lambda u)$   arbitrarily near $x$.
Any such $w_i$   belongs to $X$, whence
{ by (\ref{equation:zeroforzero})},
   $j(w_i)=0$,  in contradiction with
   {    (\ref{equation:infinitesimal-dominates})}.

\medskip The proof of (i) is now complete.

\bigskip
(ii)    Let  $H^\pm$  be the two closed half-spaces of $\mathbb R^2$
  determined by the line passing through $x$ and $x+u$.
  By (\ref{equation:iconale-bis}),  infinitely many
  $w_i$ lie in the same closed half-space, say, $H^+$.
Without loss of generality,
 $H^+\cap \interior(\I^2)\not=\emptyset.$
   Let  $u^\perp$ be the orthogonal vector to $u$
such that  $x+u^{\perp}\in H^+$.

\medskip

\begin{fact}
\label{fact:six-hic}  For all small $\epsilon >0,$
$$
\frac{\partial g(x+\epsilon u)}{\partial u^\perp}>0.
$$
\end{fact}
By way of contradiction, assume
$
{\partial g(x+\epsilon u)}/{\partial u^\perp}=0.
$
Since $g$ is piecewise linear, by { Facts  \ref{fact:g(x)=0}  and  \ref{fact:g-u-flat}},
for suitably small $\eta,\omega>0,$ the function   $g$
 vanishes over the triangle
$T=\conv(x, x+\eta u, x+\eta u+\omega u^\perp)$.
By (\ref{equation:iconale-bis}),  $\,\,T$
contains infinitely many  $w_i.$
By  (\ref{equation:zeroforzero}),    $g(w_i)=j(w_i)=0$
 against (\ref{equation:infinitesimal-dominates}).

\medskip
 \begin{fact}
\label{fact:j-jumps}
$$
\frac{\partial j(x)}{\partial u}>0.
$$
\end{fact}
Otherwise,
$
{\partial j(x)}/{\partial u}=0.
$
{Fact \ref{fact:six-hic}}   yields a fixed integer $\bar h$
such  that,  on a suitably small triangle
of the form
$T=\conv(x,x+\epsilon u,x+\epsilon u+\omega u^\perp)$,
we have   $\bar h\centerdot g\geq j$.
By (\ref{equation:iconale-bis}),
 $T$ contains
infinitely many  $w_i,$  again contradicting
(\ref{equation:infinitesimal-dominates}).

%Then, no matter the value of the derivative
%$
% {\partial j(x+\epsilon u)}/{\partial u^\perp},
%$

\medskip

We now prove a { strong form of
  Fact  \ref{fact:nonalignment}},  showing that
  $u$ is an {\it outgoing} tangent vector:

\medskip

\begin{fact}
\label{fact:outgoing-hic}
For some $\lambda>0$ the segment $\conv(x,x+\lambda u)$
intersects $X$ only at $x$.
\end{fact}

Otherwise,  from    {   Facts \ref{fact:g(x)=0}
and  \ref{fact:g-u-flat}}
it follows that
 $g$ vanishes on infinitely many points of
 $X\cap \conv(x,x+\lambda u)$
 converging to $x$.
 By  (\ref{equation:zeroforzero}),
   $j^\shortmid$ vanishes on all these points.
Since  $j$ is  piecewise linear,   $\partial j(x)/\partial u=0$, against
 {Fact \ref{fact:j-jumps}.}

\bigskip

By a {\it rational}  line in $\Rn$  we mean a line passing through
at least two distinct rational points.

\medskip

 \begin{fact}
\label{fact:eight}  $x$ is a rational point, and $u$ is a rational vector.
\end{fact}
As a matter of fact,
{Facts \ref{fact:j-jumps} and \ref{fact:j(x)=0}}  yield a
  rational line  $L$
through $x$.
On the other hand,   { Facts  \ref{fact:g-u-flat} and \ref{fact:six-hic}}
show that the line   passing through $x$ and $x+u$
is rational and different from $L$.  Thus $x$ is rational, whence so is the
vector $u$.

\bigskip
We conclude
that $X$ has $u$ as a
Bouligand-Severi  {\it outgoing rational } tangent  vector
at the rational point $x$.
\end{proof}

%Figure 1 is a sketch of the functions $g$ and $j$
%in the foregoing proof. 
%
%       \begin{figure}
%\label{table:severi}
%    \begin{center}
%    \includegraphics[height=6.5cm]{severi29.pdf}
%    \end{center}
%    \caption{\footnotesize A Bouligand-Severi outgoing tangent vector $u$
%    to  $X$ at $x$, and two functions $g$ and $j$.
%  The restriction $g\restrict X$ generates a principal
%   ideal $J'$ of $\mathcal M(X)$.
%  The restriction $j\restrict X$  does not belong  to $J'$, but belongs to
%  the only maximal ideal $I'$ of $\mathcal M(X)$ containing $J'$,
%  namely the set of all functions in $\mathcal M(X)$ vanishing at $x$.
%  So the principal quotient $\mathcal M(X)/J'$ is not semisimple.}
%   \end{figure}

\bigskip
Recalling 
{ Theorem \ref{theorem:rational-tangent-implies-notss}}
we now obtain:

 \begin{corollary}
\label{corollary:two-dimensional}
Let $X\subseteq [0,1]^2$ be a nonempty closed set.
 Then  $\mathcal M(X)$ is not strongly semisimple
iff $X$ has a  Bouligand-Severi outgoing
rational  tangent vector
$u$ at some rational point   $x\in X$.
\end{corollary}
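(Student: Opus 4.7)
The plan is to observe that the corollary is essentially a packaging of the two main theorems of this section, specialized to $n=2$. No new geometric or algebraic argument is required; the entire content has already been delivered in the preceding two proofs.

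First I would handle the ``if'' direction: assume $X$ has a Bouligand-Severi outgoing rational tangent vector $u$ at some rational point $x\in X$. Since $X\subseteq [0,1]^2$ is in particular a nonempty closed subset of $[0,1]^n$ for $n=2$, Theorem \ref{theorem:rational-tangent-implies-notss} applies verbatim and yields that $\mathcal M(X)$ is not strongly semisimple. This direction is stated so as to work uniformly in $n$, so the specialization to $n=2$ is automatic.

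Next I would handle the ``only if'' direction: assume $\mathcal M(X)$ is not strongly semisimple. Because $X\subseteq [0,1]^2$, the hypothesis of Theorem \ref{theorem:notss-implies-tangent}(ii) is met, and its conclusion is precisely that $X$ has a Bouligand-Severi outgoing rational tangent vector at some rational point of $X$. Note that for this direction one genuinely needs part (ii), not merely part (i): part (i) produces a tangent together with a nonalignment condition, but it is only in dimension two that the nonalignment can be upgraded, via the half-space dichotomy used in the proof of \ref{theorem:notss-implies-tangent}(ii), to both rationality of $(x,u)$ and the outgoing property.

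There is no obstacle to overcome: the corollary is a direct combination of Theorems \ref{theorem:rational-tangent-implies-notss} and \ref{theorem:notss-implies-tangent}(ii), and the proof consists of citing them in the appropriate direction. The only point worth emphasizing, for the reader's benefit, is that the two-dimensional hypothesis is used exclusively in the ``only if'' half, through the invocation of part (ii) rather than part (i) of Theorem \ref{theorem:notss-implies-tangent}.
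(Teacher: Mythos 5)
Your proof is correct and is exactly the paper's intended argument: the corollary is obtained by combining Theorem \ref{theorem:rational-tangent-implies-notss} (for the ``if'' direction) with Theorem \ref{theorem:notss-implies-tangent}(ii) (for the ``only if'' direction), the latter being where the two-dimensional hypothesis is used. No further comment is needed.
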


\bibliographystyle{plain}

\end{document}